\definecolor{bleu1}{RGB}{0,57,128}
\definecolor{emerald}{rgb}{0.31, 0.78, 0.47}
\definecolor{deepjunglegreen}{rgb}{0.0, 0.29, 0.29}
\newtheorem{Main}{Theorem}
\definecolor{dgreen}{rgb}{0.1,0.6,0.1}
\definecolor{bluegray}{rgb}{0.4, 0.6, 0.8}
\newtheorem{theorem}{Theorem}                             
\newtheorem{lemma}{Lemma}
\newtheorem{proposition}{Proposition}
\numberwithin{equation}{section}
\def\author#1{\gdef\autrun{\def\and{\unskip, }#1}\gdef\@author{#1}}
\def\address#1{{\def\and{\\\hspace*{18pt}}\renewcommand{\thefootnote}{}%
\footnote {#1}}%
}
\def\email#1{e-mail: #1}
\def\keywords#1{\par\medskip
\noindent\textbf{Keywords.} #1}
\renewenvironment{proof}{\noindent {\textit{Proof.}}}{\qed\vskip 0.2cm}
\newenvironment{proofof}[1]{\par
  \pushQED{\qed}%
  \normalfont \topsep6\p@\@plus6\p@\relax
  \trivlist
  \item[\hskip\labelsep
    \textit{\textbf{Proof of} #1\@addpunct{.}}]\ignorespaces
}{%
  \popQED\endtrivlist\@endpefalse
}
\newcommand{\C}{\mathbb C}
\newcommand{\R}{\mathbb R}
\newcommand{\Z}{\mathbb Z}
\newcommand{\N}{\mathbb N}
\newcommand{\T}{\mathbb{T}}
\newcommand{\im}{\mathop{\mbox{Im}}}
\renewcommand{\min}{\mathop{\mbox{min}}}
\providecommand{\abs}[1]{\lvert#1\rvert}	
\providecommand{\norm}[1]{\lVert#1\rVert}
\begin{document}




\title{Positive measure of effective quasi-periodic motion near a 
Diophantine torus}

\author{Abed Bounemoura \and Gerard Farré}

\date{}

\maketitle

\address{A. Bounemoura: CNRS - PSL Research University; \email{abedbou@gmail.com}}

\address{G. Farré: KTH, Department of Mathematics; \email{gerardfp@kth.se}}

\abstract{
It was conjectured by Herman that an analytic Lagrangian Diophantine quasi-periodic torus $\mathcal{T}_0$, invariant by a real-analytic Hamiltonian system, is always accumulated by a set of positive Lebesgue measure of other Lagrangian Diophantine quasi-periodic invariant tori. While the conjecture is still open, we will prove the following weaker statement: there exists an open set of positive measure (in fact, the relative measure of the complement is exponentially small) around $\mathcal{T}_0$ such that the motion of all initial conditions in this set is ``effectively" quasi-periodic in the sense that they are close to being quasi-periodic for an interval of time which is doubly exponentially long with respect to the inverse of the distance to $\mathcal{T}_0$. This open set can be thought as a neighborhood of a hypothetical invariant set of Lagrangian Diophantine quasi-periodic tori, which may or may not exist.

\keywords{Hamiltonian systems, quasi-periodic invariant tori, effective stability.}

\section{Introduction}
\label{sec_1}

 We will be considering the dynamics in a neighborhood of an analytic Lagrangian Diophantine quasi-periodic torus $\mathcal{T}_0$ (DQP torus for short) invariant by a real-analytic Hamiltonian system. It is well-known that without loss of generality, one may consider a real-analytic Hamiltonian $H:  \T^d \times U \rightarrow \R$, with $\T^d:=\R^d / \Z^d$ and $U\subset \R^d $ an open set containing the origin ($d \geq 2$ is an integer), of the form
\begin{equation}
\label{eq_main_ham}
 H(\theta,I)= \omega \cdot I + \mathcal{O}(\abs{I}^2)
\end{equation}
where $\omega \in \R^d$ satisfies a Diophantine condition with exponent $\tau \geq d-1$ and constant $\gamma > 0$ (in short, $\omega\in DC(\tau, \gamma))$: for all $k=(k_1,\dots,k_d)\in \Z^{d}\setminus \{0\}$,
\[ |\omega \cdot k| \geq \gamma |k|^{-\tau}, \quad |k|:=|k_1|+\cdots+|k_d|. \]  
Without loss of generality, we assume $|\omega|=1$. A fundamental object in the study of the dynamics near $\mathcal{T}_0= \T^d\times \{0\}$ is the so-called Birkhoff normal form (\cite{Bir66}, see also \cite{douady} for a more recent treatment). For any $m \in \N$, $m \geq 2$, there exists a polynomial $N_m(I)=\omega\cdot I+\mathcal{O}(\abs{I}^2)$ of degree $m$ and an analytic exact-symplectic transformation $\Psi_m(\theta, I) = (\theta + \mathcal{O}(|I|), I + \mathcal{O}(|I|^2))$ such that 
\begin{equation}\label{eq_after_change}
H\circ \Psi_m(\theta, I) = N_{m} (I) + \mathcal{O}(\abs{I}^{m+1}) .
\end{equation}
Moreover, these constructions have a formal limit as $m \rightarrow + \infty$; there exists a formal power series $N$ and a formal exact-symplectic transformation $\Psi$ such that formally
\[  H\circ \Psi(\theta, I) = N (I). \] 
The formal power series $N$ is unique and is called the Birkhoff normal form (BNF for short, the formal mapping $\Psi$ is also unique upon normalization) and it was proved recently by Krikorian that generically it is divergent (\cite{Kri19}, the corresponding statement for the transformation was proved much earlier by Siegel in~\cite{S54}). Yet the existence of the truncated BNF $N_m$ for any $m \geq 2$ has several consequences. 

Without further assumptions, given $r>0$ small enough, choosing an ``optimal" order of truncation $m=m(r)$ (Poincaré ``summation to the least term"), on a neighborhood of size $r$ around the origin, the remainder in \eqref{eq_after_change} $\mathcal{O}(\abs{I}^{m+1})$ can be made exponentially small with respect to $(1/r)^a$ where $a=1/(\tau+1)$, leading to the following estimates for all solutions $(\theta(t),I(t))$ with initial conditions $(\theta_0,I_0)$ with $|I_0|<r$: there exist positive constants $C$ and $c$ independent of $r$ and a vector $\omega(I_0) \in \R^d$ with ${|\omega(I_0)-\omega| \leq Cr}$ such that
\begin{equation}\label{effexp}
|\theta(t)-\theta_0-t\omega(I_0)| \leq Cr, \quad |I(t)-I_0|\leq Cr^2 \quad \text{for} \quad |t| \leq \exp(cr^{-a}).
\end{equation}
We refer to the general theorem contained in~\cite{jorba} and references therein for earlier results (the elementary fact that one can also control the evolution of the angles was pointed out in~\cite{Pop00}). Those estimates~\eqref{effexp} can be understood as an effective stability of the quasi-periodic motion for an exponentially long interval of time. In general, those estimates cannot be improved: it follows from a recent construction of an unstable DQP torus in~\cite{gerard} that the action variables cannot be stable for a longer interval of time, but stronger results can be obtained if one imposes non-degeneracy conditions on the BNF.

First, application of Nekhoroshev theory (\cite{Nek77}, \cite{Nek79}) yields stability of the action variables for a doubly exponentially long interval of time for all solutions with $|I_0|<r$ and $r$ small enough:
\begin{equation}\label{nek}
|I(t)-I_0|\leq Cr^2 \quad \text{for} \quad |t| \leq \exp\exp(cr^{-a}).
\end{equation}
This was first proved by Morbidelli and Giorgilli in~\cite{MG}) under a convexity assumption on $N$ (that is, the quadratic part of $N_2$ is positive definite) and later in \cite{BFN} under a generic ``steepness" assumption on $N$ (more precisely, it applies provided $N_{\bar m}$, where $\bar m$ depends only on $d$, avoids a semi-algebraic subset of positive codimension). Observe that in~\eqref{nek} only the time-scale of stability of the action variables is improved with respect to~\eqref{effexp}.

Applying classical KAM theory (\cite{kolmo,arnold,moser})), provided the quadratic part of $N_2$ is non-degenerate (this is Kolmogorov non-degeneracy), for $r>0$ small enough there exists a set of DQP tori $\mathcal{K}_r$ in $\T^d \times \{|I|<r\}$ with positive Lebesgue measure. In fact, the relative measure of the complement of $\mathcal{K}_r$ is bounded by $\exp(-cr^{-a})$. KAM theory extends to more general non-degeneracy assumptions (see~\cite{russ} for instance) and thus the statement on the existence of positive measure of DQP tori remains true provided that for some $m \geq 2$, the truncated BNF $N_m$ has the property that the local image of its gradient $\nabla N_m$ is not contained in any hyperplane of $\R^d$ (see~\cite{EFK} and also Theorem~\ref{cor_dimd} below for a more precise statement). Observe that when KAM theory applies, one can easily find an open set $\mathcal{U}_r$ in $\T^d \times \{|I|<r\}$ ``centered around" $\mathcal{K}_r$ such that for solutions with initial condition $(\theta_0,I_0) \in \mathcal{U}$, the effective stability of the quasi-periodic motion for an exponentially long interval of time given by~\eqref{effexp} is improved to a doubly exponentially long interval of time: 
\begin{equation}\label{effdoubleexp}
|\theta(t)-\theta_0-t\omega(I_0)| \leq Cr, \quad |I(t)-I_0|\leq Cr^2 \quad \text{for} \quad |t| \leq \exp\exp(cr^{-a}).
\end{equation}
In~\cite{herman_icm}, Herman conjectured that the conclusions of the application of KAM theory holds true without any non-degeneracy assumptions: more precisely, for $H$ as in~\eqref{eq_main_ham} with $\omega$ Diophantine, he asked whether there always exists, for $r>0$ small enough, a set of DQP tori $\mathcal{K}_r$ in $\T^d \times \{|I|<r\}$ with positive Lebesgue measure. In the case $d=2$ this follows from general results of R{\"u}ssmann (\cite{russmann}) and Bruno (\cite{Bru71}) (the arithmetic condition on $\omega$ can be slightly weakened) but for $d\geq 3$ this is an open problem; the only general result is due to Eliasson, Fayad and Krikorian (\cite{EFK}) who proved that $\mathcal{T}_0$ is always accumulated by other DQP tori but along an analytic submanifold, so with zero Lebesgue measure in general.    

The aim of this paper is to prove that the following formal consequence of Herman conjecture holds true; for $r>0$ small enough, there exists an open set $\mathcal{U}_r$ in $\T^d \times \{|I|<r\}$, the complement of which has a relative measure bounded by $ \exp(-cr^{-a})$, such that for solutions with initial condition $(\theta_0,I_0) \in \mathcal{U}_r$, one has effective stability of the quasi-periodic motion for a doubly exponentially long interval as expressed in~\eqref{effdoubleexp}. This is the content Theorem~\ref{thm_main} below. Of course, whether this set $\mathcal{U}_r$ actually contains a set of positive measure $\mathcal{K}_r$ of DQP tori is still an open question.

\section{Main results}
 
To state precisely our main results, let us introduce some notations. Given $\rho>0$ and $r>0$, we consider complex domains 
\[ \T^d_{\rho}:=\left\{ \theta \in \C^d / \Z^d \; | \; |\im \theta| < \rho \right\}, \quad \mathcal{B}_r:=\{ I\in \C^d \; | \; |I|< r \} \] 
with $\im \theta:=(\im \theta_1, \dots, \im \theta_d)$ and we let $B_r:=\{  I\in \R^d \; | \; |I|< r \}=\mathcal{B}_r \cap \R^d$. We denote by $\mathcal{A}_{\rho,r}$ the Banach space of holomorphic bounded functions $f : \T^{d}_{\rho} \times \mathcal{B}_{r}  \rightarrow \C$, which are real-valued for real arguments, equipped with the norm 
 \begin{equation*}
|f|_{\rho, r}:=\sup_{z\in \T^{d}_{\rho} \times \mathcal{B}_{r} }\abs{f(z)}.
 \end{equation*}
Here's our main result.

 \begin{Main}
\label{thm_main}
Assume $H \in \mathcal{A}_{\rho,r_0}$ is as in \eqref{eq_main_ham} with $\rho>0$, $r_0>0$, and $\omega\in DC(\tau, \gamma)$. Then there exist positive constants $r^*$, $c$ and $C$ which depend only on $d$, $\gamma$, $\tau$, $\rho$, $r_0$, $|H|_{\rho, r_0}$ and the BNF $N$ such that for all $0<r<r^*$, there is an open set $\mathcal{U}_r \subseteq \T^d  \times B_r$ with the measure estimate
 \begin{equation}
    \label{eq_meas_compl}
     \mathrm{Leb}(\T^d  \times B_r \setminus \mathcal{U}_r) < \mathrm{Leb}(\T^d  \times B_r)\exp(-c r^{-a})
   \end{equation} 
and such that for any $(\theta_0, I_0)\in \mathcal{U}_r$, there exists $\omega(I_0)\in \R^d$ and the corresponding solution $(\theta(t), I(t))$ satisfies
  \begin{equation}
   \label{eq_quasi_periodic}
    |\theta(t)-\theta_0-t\omega(I_0)| \leq Cr, \quad |I(t)-I_0|\leq Cr^2 \quad \text{for} \quad |t| \leq \exp\exp(cr^{-a}).
   \end{equation}
\end{Main}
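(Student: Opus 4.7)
The plan is to combine two normalizations. A first, Birkhoff-type, normalization reduces the $\theta$-dependence of $H$ to a single-exponentially small remainder; a second, iterative KAM-type, normalization, carried out only on a subset of $B_r$ of near-full measure, further reduces the remainder to a doubly-exponentially small quantity. The estimate \eqref{eq_quasi_periodic} then follows at once from Cauchy estimates and Hamilton's equations applied to the fully normalized Hamiltonian, and \eqref{eq_meas_compl} encodes the measure of the excluded set in the second stage.

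For the first stage, I apply the truncated BNF \eqref{eq_after_change} at the optimal order $m=m(r) \sim c_0 r^{-a}$. Standard estimates give
\[ H\circ\Psi_m = N_m(I) + R(\theta,I), \qquad |R|_{\rho', r} \leq \exp(-c_1 r^{-a}) \]
on a slightly reduced complex domain $\T^d_{\rho'} \times \mathcal{B}_r$. The new frequency map $\omega(I):=\nabla N_m(I)$ differs from $\omega$ by $\mathcal{O}(r)$, so the assumption $\omega \in DC(\tau,\gamma)$ implies the bound $|\omega(I)\cdot k| \geq \gamma/(2|k|^\tau)$ automatically on the whole of $B_r$ for all nonzero $k$ with $|k| \leq K_* \sim r^{-a}$, without any non-degeneracy hypothesis on $N$.

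For the second stage, I iterate a KAM-type averaging: at step $j \geq 1$, acting on $H_{j-1} = N_{(j-1)} + R_{j-1}$, construct a close-to-identity symplectic $\Phi_j$ eliminating the Fourier harmonics of $R_{j-1}$ with $0 < |k| \leq K_j$. This requires a small-divisor estimate $|\nabla N_{(j-1)}(I)\cdot k| \geq \gamma_j|k|^{-\tau}$ on that range of $k$. With a standard choice of $(K_j, \rho_j, \gamma_j)$ the iteration converges quadratically, $|R_j| \lesssim |R_{j-1}|^{3/2}$; after $n \sim c_2 r^{-a}$ steps the remainder is doubly-exponentially small, $|R_n| \leq \exp(-\exp(c_3 r^{-a}))$. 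I define $\mathcal{U}_r$ as the image, under $\Psi_m \circ \Phi_1 \circ \cdots \circ \Phi_n$, of the real part of the reduced phase space on which every step is defined; on $\mathcal U_r$, Cauchy estimates yield $|\partial_\theta R_n| + |\partial_I R_n| \leq \exp(-\exp(c_4 r^{-a}))$, and integrating $\dot I = -\partial_\theta R_n$ and $\dot\theta = \nabla N_{(n)}(I) + \partial_I R_n$ for $|t| \leq \exp\exp(c_5 r^{-a})$, then pulling back through the $\mathcal{O}(r)$-close-to-identity composition of transformations, produces \eqref{eq_quasi_periodic} with $\omega(I_0) := \nabla N_{(n)}(I_0)$ up to $\mathcal O(r)$.

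The hard part is the measure estimate \eqref{eq_meas_compl} in the absence of any Kolmogorov or R\"ussmann non-degeneracy on $N$. The excluded set is contained in a union, over $j \leq n$ and over $0 < |k| \leq K_j$, of the strips $\{I \in B_r : |\nabla N_{(j-1)}(I)\cdot k| < \gamma_j|k|^{-\tau}\}$. These strips are empty for $|k| \leq K_*$ by the first-stage analysis; for $K_* < |k| \leq K_n$ they must be controlled via a R\"ussmann-type sublevel-set estimate applied to the polynomial $I \mapsto \nabla N_{(j-1)}(I)\cdot k$, of degree at most $m-1 \sim r^{-a}$. The naive bound $(\delta/\|P\|_{B_r})^{1/\deg P}$ degenerates to $1$ as $\deg P \to \infty$ if $\delta$ is comparable to $\|P\|_{B_r}$, so the sequences $(K_j, \gamma_j)$ must be balanced delicately: $\gamma_j$ should decay fast enough that each individual sublevel bound is nontrivial, yet slowly enough that the KAM iteration itself remains convergent. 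Summing the resulting sublevel contributions over $j$ and $k$, and using crucially that the starting size $|R_0| = \exp(-c_1 r^{-a})$ is already single-exponentially small (so that the total loss in Diophantine thresholds is small on the exponential scale), is the technical heart of the argument and yields the claimed total excluded measure $|B_r|\exp(-c_6 r^{-a})$.
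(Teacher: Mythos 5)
Your high-level architecture matches the paper's: a first Birkhoff normalization to make the remainder single-exponentially small, then a second normalization on a near-full-measure subset of $B_r$ to push the remainder to doubly-exponentially small, then Cauchy estimates and integration. Your ``iterative KAM-type averaging'' is morally the same as the paper's single invocation of P\"oschel's normal form lemma (Theorem~\ref{pos}); both normalize away non-resonant harmonics near a set of Diophantine frequency values and produce a remainder of size $\exp(-1/\nu)$ with $\nu$ a power of $\mu$.

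The gap is in the measure estimate, which you correctly identify as the crux, but for which your sketch does not actually give a working argument. You propose to bound the excluded set by a union over $k$ of sublevel sets of the polynomials $P_k(I) := \nabla N_{(j-1)}(I)\cdot k$, invoking a R\"ussmann-type estimate. There are two obstructions you do not resolve. First, these polynomials have degree $\sim r^{-a}$, so the sublevel-set exponent $1/\deg$ degenerates as $r\to 0$; you acknowledge this and say the parameters ``must be balanced delicately,'' but that is precisely the missing content. Second, and more fundamentally, a sublevel-set estimate requires a \emph{lower bound on some derivative} of $P_k$, uniform in $k/|k|$. Without any non-degeneracy hypothesis, $\nabla N_m$ may map into a proper subspace $V\subsetneq\R^d$; for $k$ nearly orthogonal to $V$, \emph{all} derivatives of $P_k$ on $B_r$ are small, the R\"ussmann constant $\beta$ degenerates, and the estimate gives nothing. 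This is exactly the case $2\le l=\dim V\le d-1$, which the paper singles out as the only hard case.

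The paper resolves both obstructions with a structural device your proposal does not contain. It introduces $V$ as the span of the derivatives of $F_{m^*}=\nabla N_{m^*}$ at $0$, with $m^*$ the \emph{fixed} smallest order at which this span stabilizes. After reducing to $l$ ``free'' components $F_r^l$ of $F_r$ (the rest being fixed linear combinations by~\eqref{eq_linear_comb}), and crucially using that $\omega\in V$ is itself Diophantine, one gets the inequality $|k\cdot F_r(I)| \ge \gamma|k|^{-\tau}\,|g_{r,k}(I)|$ where $g_{r,k}(I)=\xi_k\cdot F_r^l(I)$ for a suitable unit vector $\xi_k\in S_l$. This transfers the small-divisor problem for the degenerate, high-degree polynomial $P_k$ to the scalar function $g_{r,k}$, which by Lemma~\ref{lemma_subespai} has a derivative of order $\le m^*-1$ bounded below by a constant $\beta$ \emph{independent of $k$ and $r$}. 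Proposition~\ref{prop_russ} then gives a sublevel-set bound with the fixed exponent $1/(m^*-1)$, and choosing $\bar\tau=(m^*-1)(d+1)+\tau+1$ makes the sum over $k$ converge to $C\bar\gamma^{1/(m^*-1)}r^{d-1}$. Taking $\bar\gamma=\mu^{1/4}$ makes this exponentially small in $r^{-a}$. Without the decomposition through $V$ and the inequality relating $k\cdot F_r$ to $g_{r,k}$, the measure estimate does not close; that decomposition, not the ``balancing'' of KAM thresholds, is the technical heart of the proof.
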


Even though we did not state it, it will be clear from the proof that not only the vectors $\omega(I_0)$ are at a distance $Cr$ from $\omega\in DC(\tau, \gamma)$ but also they are at an exponentially small distance $\exp(-cr^{-a})$ from a subset of vectors in $DC(\bar \tau, \bar \gamma)$ for a proper choice of $\bar \tau>\tau$ and $\bar \gamma<\gamma$. To explain the dependence of the constants on the BNF and the strategy of the proof, recall that given $m \geq 2$, $N_m$ is the truncated BNF up to order $m$, and we extend this notation to $m=1$ by setting $N_1(I)=\omega\cdot I$. Now let $F_m=\nabla N_m$ be the gradient of $N_m$ for $m \geq 1$, which is a polynomial map of degree $m-1$, and define $V_m$ to be the vector space spanned by the partial derivatives of $F_m$ evaluated at the origin:
\begin{equation}\label{freq_space}
V_m := \mathrm{Vect}\{\partial_I^{\alpha} F_m(0), \; \alpha \in \N^d\}=\mathrm{Vect}\{\partial_I^{\alpha} F_m(0), \;  |\alpha| \leq m-1\}.
\end{equation}
These vector spaces form a non-decreasing sequence, $V_m \subseteq V_{m+1}$ for $m \geq 1$ and they are contained in $\R^d$: consequently this sequence is stationary and we can define $m^*\geq 1$ to be the smallest integer such that $V_m=V_{m^*}:=V$, for all $m\geq m^*$. Obviously, the polynomial map $F_{m^*} : \R^d \rightarrow V$ is non-degenerate in the sense that its partial derivatives generate $V$, and the dependence of the constants on the BNF in Theorem~\ref{thm_main} will only depend on $N_{m^*}$. To simplify the notations and statements in the sequel, we shall call ``constants" any positive constant which depends only on $d$, $\gamma$, $\tau$, $\rho$, $r_0$, $|H|_{\rho, r_0}$ and the truncated BNF $N_{m^*}$; in particular, a statement valid for $r>0$ small enough means that there exists a constant $r^*$ for which the statement is valid for all $0<r < r^*$. As we shall see, the main difficulty in proving Theorem \ref{thm_main} is when the dimension $l$ of $V$ satisfies $2 \leq l \leq d-1$ (which only occurs when $d \geq 3$) and the proof will consist in the following steps:
\begin{itemize}
 \item[1)] First, for $r>0$ small enough, we apply a BNF normalization up to optimal order $m=m(r)$
\[ H\circ \Psi_m(\theta, I) = N_{m} (I) + P_m(\theta,I) \]
where the size of the remainder $P_m(\theta,I)=\mathcal{O}(\abs{I}^{m+1})$ is bounded by $\mu=\exp(-c r^{-a})$. We shall deduce this from a general statement taken from~\cite{jorba}, and this is the content of Theorem~\ref{thm_jorba} in~\S\ref{sec_bir}. 
\item[2)] For any $r>0$ small enough, we have $m(r)\geq m^*$ and the polynomial mapping $F_{m(r)}=\nabla N_{m(r)} : \R^d \rightarrow V_{m(r)}=V$, which we denote by $F_r$ for simplicity, is non-degenerate and close to $F_{m^*}$; using this and the fact that $V \cap DC(\tau,\gamma)$ is non-empty (as it contains $\omega$) we will show that for some sufficiently large $\bar \tau>\tau$ and sufficiently small $\bar \gamma<\gamma$, the set $S_r$ of points $I_0 \in B_r$ for which $F_r(I_0) \in DC(\bar \tau,\bar \gamma)$ has positive measure (more precisely, we will show that the complement of this set has a measure bounded by a constant times a power of $\bar \gamma$). This will be stated in Proposition~\ref{arit} in~\S\ref{sec_arithmetic}.
 \item[3)] Consider the Hamiltonian $\bar H_r=H \circ \Psi_{m}$ with $m=m(r)$ given by the first step, restricted to the $\mu^{1/2}$-neighborhood of the set $\T^d \times S_r$ given by the second step with the value $\bar \gamma=\mu^{1/4}$. On this domain $\bar H_r$ can be considered as a $\mu$-perturbation of an integrable Hamiltonian with frequencies in $DC(\bar \tau,\bar \gamma)$ and by classical results on integrable normal form up to an exponentially small remainder (similar but slightly more general than Birkhoff normalization), there exists a transformation $\Phi$ such that $\bar H_r \circ \Phi$ is integrable up to a remainder which is exponentially small in $(1/\mu)^{\bar a}$ for some $\bar a>0$. This will be the content of Theorem~\ref{pos} in~\S\ref{sec_exp}, which follows from a statement taken from~\cite{Pos93}.  
 \item[4)] The last step is a mere conclusion: $\bar H_r \circ \Phi=H \circ \Psi_{m} \circ \Phi$ is integrable up to an exponentially small remainder in $(1/\mu)^{a'}$, and thus doubly exponentially small in $(1/r)^{a}$. This will give stability estimates first for $\bar H_r \circ \Phi$, and then for $H$, for a time-scale which is doubly exponentially large in $(1/r)^{a}$. By the second step, the measure estimate of the solutions not covered by these stability estimates is a power of $\bar \gamma=\mu^{1/4}$ and consequently it is exponentially small in $(1/r)^{a}$. The corresponding details will be given in~\S\ref{sec_conclusion}.  
\end{itemize} 

It remains to discuss the cases where the dimension $l$ of $V$ satisfies $l=1$ or $l=d$. In the first case, we do have $V=\R\omega$ and the second step we described above is valid on a whole neighborhood of the origin and not only on a proper subset, and so are the estimates~\eqref{eq_quasi_periodic}.

\begin{Main}
\label{cor_dim1}
 Assume $H \in \mathcal{A}_{\rho,r_0}$ is as in \eqref{eq_main_ham} with $\omega\in DC(\tau, \gamma)$, and $V=\R\omega$. Then there exist positive constants $c$ and $C$ such that for $r$ small enough, for any $(\theta_0, I_0)\in \T^d \times B_{r}$, there exists $\omega(I_0)\in \R^d$  and the corresponding solution $(\theta(t), I(t))$ satisfies
  \begin{equation*}
    |\theta(t)-\theta_0-t\omega(I_0)| \leq Cr, \quad |I(t)-I_0|\leq Cr^2 \quad \text{for} \quad |t| \leq \exp\exp(cr^{-a}).
   \end{equation*}
\end{Main}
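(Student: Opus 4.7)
The case $V=\R\omega$ is considerably simpler than the main theorem because the arithmetic step 2 becomes trivial on the entire ball. Indeed, since $F_m = \nabla N_m$ is a polynomial of degree $m-1$ whose Taylor coefficients at the origin all lie in $V_m = V = \R\omega$ for $m \geq m^*$, we have $F_m(I) \in \R\omega$ for \emph{every} $I$. Writing $F_m(I) = \lambda_m(I)\,\omega$ with $\lambda_m$ a real polynomial satisfying $\lambda_m(0)=1$, for $r$ small enough $\lambda_m$ takes values in $(1/2,3/2)$ on $B_r$, so the frequency
\[ \omega(I_0) := \nabla N_m(I_0) = \lambda_m(I_0)\,\omega \]
belongs to $DC(\tau,\gamma/2)$ for \emph{all} $I_0 \in B_r$. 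In particular the bad set that had to be removed in step 2 of the main strategy is empty, and one may take $\mathcal{U}_r = \T^d \times B_r$.

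The plan is then to execute steps 1, 3 and 4 of the strategy outlined before Theorem~\ref{cor_dim1}, omitting step 2. First, apply the Birkhoff normalization of Theorem~\ref{thm_jorba} at an optimal order $m = m(r)$, giving
\[ H\circ \Psi_m = N_m + P_m, \qquad |P_m| \leq \mu := \exp(-c r^{-a}), \]
on a suitable complex neighborhood of $\T^d \times B_r$. Since every $I_0\in B_r$ has uniformly Diophantine frequency, Theorem~\ref{pos} applies directly to $H\circ\Psi_m$ on the whole domain $\T^d\times B_r$ (no preliminary restriction to a bad-set complement is required), producing a symplectic transformation $\Phi$ close to the identity such that $\bar H := H\circ \Psi_m \circ \Phi$ is integrable up to a remainder which is exponentially small in $(1/\mu)^{\bar a}$, hence doubly exponentially small in $(1/r)^a$.

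The last step is the standard deduction of stability: the doubly exponentially small remainder of $\bar H$ yields $|I(t)-I_0|\leq Cr^2$ over a doubly exponentially long time interval, while the angle equation linearizes to a rotation with frequency close to $\nabla N_m(I_0)$ up to an $O(r)$ error. Pulling these estimates back by the close-to-identity changes of coordinates $\Psi_m \circ \Phi$ transfers them to the original Hamiltonian $H$ with adjusted constants, completing the proof. The only point requiring care is that the constants in Theorem~\ref{pos} depend only on the \emph{uniform} Diophantine data $(\tau,\gamma/2)$ and standard size bounds on $N_m$, which is precisely the form of the statement borrowed from~\cite{Pos93}; there is no further obstacle, since the genuinely difficult measure-theoretic estimate of step 2 simply does not arise in this degenerate regime.
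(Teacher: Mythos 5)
Your proof is correct and follows essentially the same route as the paper's: observe that $V=\R\omega$ forces $\nabla N_{m(r)}(I)=\lambda_r(I)\,\omega$ with $\lambda_r(0)=1$, so every frequency in $B_r$ is Diophantine with constants $(\tau,\gamma/2)$, making step 2 vacuous and allowing $\mathcal{U}_r=\T^d\times B_r$; then steps 1, 3, 4 run unchanged. One point you leave implicit is why $\lambda_{m(r)}$ stays within $(1/2,3/2)$ on $B_r$ uniformly as $m(r)\to\infty$ — this is not automatic from $\lambda_m(0)=1$ alone since the polynomial varies with $r$, but it follows from estimate~\eqref{Birk1} with $m=2$ (which gives $\lambda_r(I)-1=O(r)$ on $B_{2r}$ with a constant independent of $m(r)$), and this is precisely how the paper justifies it.
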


This result is in fact not surprising because it follows from a result proved independently by R{\"u}ssmann (\cite{russmann}) and Bruno (\cite{Bru71}) that in this case, the transformation to the BNF actually converges and so $H$ must be analytically conjugated to its linear part $\omega\cdot I$ (and the conjugacy is exact-symplectic up to a time-change).

Now in the case $l=d$, we do have $V=\R^d$, the BNF is actually non-degenerate and using only the first step of the scheme of the proof described above together with an abstract result of  R{\"u}ssmann (\cite{russ}) we can be sure that the open set $\mathcal{U}_r$ in Theorem~\ref{thm_main} actually contains a set $\mathcal{K}_r$ of DQP tori.   

\begin{Main}
\label{cor_dimd}
 Assume $H \in \mathcal{A}_{\rho,r_0}$ is as in \eqref{eq_main_ham} with $\omega\in DC(\tau, \gamma)$, and $V=\R^d$. Then there exists a constant $c>0$ such that for $r$ small enough, there is a set $\mathcal{K}_r \subseteq \T^d \times B_{r}$ of DQP tori with the measure estimate
 \begin{equation}
    \label{eq_meas_compl2}
     \mathrm{Leb}(\T^d \times B_r \setminus \mathcal{K}_r) < \mathrm{Leb}(\T^d \times B_r)\exp(-c r^{-a}).
   \end{equation} 
\end{Main}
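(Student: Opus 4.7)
The plan is to combine the first step of the proof scheme described for Theorem~\ref{thm_main} with a direct application of R\"ussmann's abstract KAM theorem from~\cite{russ}, no further step being needed. For $r>0$ small enough, Theorem~\ref{thm_jorba} furnishes an exact-symplectic analytic transformation $\Psi_m$ with $m=m(r)$ the optimal order of truncation, such that
\[ H\circ \Psi_m(\theta,I)=N_m(I)+P_m(\theta,I), \]
where on a suitable complex neighborhood of $\T^d\times B_r$ the remainder satisfies $|P_m|\leq \mu:=\exp(-cr^{-a})$. At this level no non-degeneracy has yet been used.

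The input in this case is the hypothesis $V=\R^d$. By definition of $m^*$, and because $m(r)\geq m^*$ once $r$ is small enough, the partial derivatives of $F_m=\nabla N_m$ at the origin span $\R^d$, uniformly in $r$ in terms of quantities depending only on $N_{m^*}$. This is precisely R\"ussmann's non-degeneracy condition for the integrable part $N_m$. Viewing $N_m+P_m$ as a $\mu$-perturbation of the non-degenerate integrable Hamiltonian $N_m$ on $\T^d\times B_r$, R\"ussmann's theorem produces a set $\widetilde{\mathcal K}_r$ of Lagrangian Diophantine invariant tori whose complement in $\T^d\times B_r$ has relative measure bounded by a fixed positive power of an auxiliary Diophantine constant $\bar\gamma>0$, provided the scheme's smallness condition relating $\mu$ and $\bar\gamma$ is met. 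Pulling back, we set $\mathcal{K}_r:=\Psi_m(\widetilde{\mathcal K}_r)$; since $\Psi_m$ is an exact-symplectic diffeomorphism close to the identity on a neighborhood of the origin, it distorts Lebesgue measure by a factor close to $1$, and~\eqref{eq_meas_compl2} will follow from the measure estimate on $\widetilde{\mathcal K}_r$.

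The main obstacle is quantitative: one must check that with $\mu=\exp(-cr^{-a})$ one can choose $\bar\gamma$ so that both (i) R\"ussmann's smallness threshold of the form $\mu\leq C\bar\gamma^{\kappa}$ (for some constants $C,\kappa$ depending only on $d,\tau$ and $N_{m^*}$) is satisfied for all $r$ small enough, and (ii) the resulting measure estimate for the complement, a fixed positive power of $\bar\gamma$, is bounded by $\exp(-c'r^{-a})$. Both requirements are met by taking, for instance, $\bar\gamma=\mu^{1/(2\kappa)}$: the quotient $\mu/\bar\gamma^{\kappa}=\mu^{1/2}$ is exponentially small and $\bar\gamma$ itself is still exponentially small in $r^{-a}$, so the relative measure of the complement, a fixed power of $\bar\gamma$, is bounded by $\exp(-c'r^{-a})$ after adjusting $c'$.

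The remainder of the work is bookkeeping: one has to verify that all the constants extracted from Theorem~\ref{thm_jorba} and from~\cite{russ} depend only on the allowed data $d,\gamma,\tau,\rho,r_0,|H|_{\rho,r_0}$ and on $N_{m^*}$, which is true because $\Psi_m$ is constructed uniformly with respect to $m\geq m^*$ and the R\"ussmann non-degeneracy of $F_m$ is controlled by that of $F_{m^*}$ for $m\geq m^*$.
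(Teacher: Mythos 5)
Your proposal is correct and follows essentially the same route as the paper: apply Theorem~\ref{thm_jorba} to reduce $H$ to $N_r+P_r$ with $|P_r|\leq\mu=\exp(-cr^{-a})$, observe that $V=\R^d$ makes $N_{m^*}$ (hence $N_r$ for $r$ small) R\"ussmann non-degenerate, invoke R\"ussmann's KAM theorem from~\cite{russ}, and then push the resulting family of tori forward through $\Psi_r$. The paper records the outcome of the $\bar\gamma$-versus-$\mu$ optimization explicitly as a bound of the form $C\mu^{1/(2(m^*-1))}$ on the uncovered set, whereas you carry out that bookkeeping by hand with the choice $\bar\gamma=\mu^{1/(2\kappa)}$, but this is the same calculation and both yield a complement of relative measure $\exp(-cr^{-a})$.
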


Again, this result is not new since the existence of a set of DQP tori with Lebesgue density one has been already proved in this context in~\cite{EFK}; yet the extra information contained in Theorem~\ref{cor_dimd} is the exponentially small measure~\eqref{eq_meas_compl2} which shows that the abundance of tori is the same for Kolmogorov non-degenerate or R{\"u}ssmann non-degenerate BNF.

To conclude, let us point out that we have decided to focus on the dynamics near a DQP torus, but we expect our main result in Theorem~\ref{thm_main} to be valid near a Diophantine elliptic fixed point (see~\cite{BFN2} for results on double exponential stability near elliptic fixed points) with few modifications; indeed, the result we use in the first step is also known in this case (and follows again from~\cite{jorba}) and after performing such a transformation, upon removing a subset with exponentially small measure in phase space, one can use ``non-singular" analytic action-angles coordinates and the rest of the proof follows exactly in the same way (alternatively, one could only use elliptic coordinates but statements such as Proposition~\ref{prop_russ} and Theorem~\ref{pos} below would have to be re-proven in this setting). However, our proof does not extend to non-analytic Gevrey classes and we do not know if our main result holds true in this context; this may be related to the fact that Herman's conjecture is known to be false in these classes (see~\cite{EFK}).

\section{BNF with exponentially small remainder}
\label{sec_bir}

We first recall the existence of a truncated BNF up to an exponentially small remainder. From now on, we denote by $\Pi_\theta$ (resp. $\Pi_I$) the projection onto angle components (resp. action components). 

\begin{theorem}
\label{thm_jorba}
Assume $H \in \mathcal{A}_{\rho,r_0}$ is as in \eqref{eq_main_ham} with $\omega\in DC(\tau, \gamma)$. Then there exist positive constants $c$ and $C$ such that for $r$ small enough, there exists an analytic exact-symplectic embedding $\Psi_r : \T^{d}_{\rho/2} \times \mathcal{B}_{3r}  \longrightarrow \T^{d}_{\rho} \times \mathcal{B}_{4r} $ with the estimates
\begin{equation}\label{estdist1}
|\Pi_\theta \Psi_r-\mathrm{Id}|_{\rho/2, 3r } \leq Cr, \quad |\Pi_I \Psi_r-\mathrm{Id}|_{\rho/2, 3r } \leq Cr^2
\end{equation}
such that
\[H \circ \Psi_{r}(\theta, I)= N_r(I)+ P_{r}(\theta, I)\]
with the estimate
\begin{equation}\label{est1}
|P_r|_{\rho/2, 3r} \leq \exp(-cr^{-a})
\end{equation}
and, for $m(r):=[c r^{-a}]$, we have $N_r=N_{m(r)}$ and the estimate
\begin{equation}\label{Birk1}
|N_r-N_m|_{3r} \leq Cr^{m+1}, \quad 1 \leq m \leq m^*+1.
\end{equation}
\end{theorem}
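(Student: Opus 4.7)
The plan is to deduce the theorem from the quantitative iterative normal form theorem of Jorba and Villanueva~\cite{jorba}. The classical Birkhoff procedure constructs, inductively in $m \geq 2$, a composition of time-one symplectic maps $\Psi^{(m)} = \exp(X_{\chi_2}) \circ \cdots \circ \exp(X_{\chi_m})$ such that $H \circ \Psi^{(m)} = N_m + R^{(m)}$, where at step $k$ one solves the cohomological equation $\omega \cdot \partial_\theta \chi_k = P^{\mathrm{osc}}_k + \mathrm{const}$ to eliminate the $\theta$-oscillating part of the current Hamiltonian at degree $k$ in $I$. Each such equation is solved with the small-divisor loss $|k|^\tau/\gamma$ coming from $\omega \in DC(\tau, \gamma)$, together with a Fourier truncation. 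Tracking the successive losses of analyticity on a decreasing sequence of domains $(\rho_k, r_k)$, the output of~\cite{jorba} provides a bound of the form $|R^{(m)}|_{\rho', r'} \leq A^m (m!)^{\tau+1} r^{m+1}$ for suitable domain shrinkings and a constant $A$ depending only on the data.

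The next step is to optimize over $m$. Setting $m(r) := [c r^{-a}]$ with $a = 1/(\tau+1)$ and $c$ small enough, a Stirling computation shows that $A^m (m!)^{\tau+1} r^{m+1}$ attains its minimum (up to multiplicative constants) at this value of $m$, and the minimum is of order $\exp(-c r^{-a})$. Declaring $N_r := N_{m(r)}$ and $P_r := R^{(m(r))}$ then gives~\eqref{est1}. For the proximity estimates~\eqref{estdist1}, I would use that each generator $\chi_k$ is a polynomial of degree $\geq k \geq 2$ in $I$, so the vector field $X_{\chi_k}$ is $\mathcal{O}(|I|^{k-1})$ in the angle direction and $\mathcal{O}(|I|^k)$ in the action direction. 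Composing these elementary maps for $k = 2, \ldots, m(r)$ on $\mathcal{B}_{3r}$ and summing a geometric series (with common ratio of order $r$) yields the $\mathcal{O}(r)$ and $\mathcal{O}(r^2)$ shifts claimed in~\eqref{estdist1}, uniformly in $m(r)$.

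For~\eqref{Birk1}, I would invoke the uniqueness of the formal BNF: since the iterative procedure of~\cite{jorba} up to order $m(r)$ agrees with the formal construction, the coefficients of $N_r = N_{m(r)}$ of degree $\leq m^*+1$ coincide with those of the formal truncation $N_{m^*+1}$ and are therefore absolute constants (depending only on the BNF). For $1 \leq m \leq m^*+1$, the difference $N_r - N_m$ is then a polynomial in $I$ starting at degree $m+1$; its portion of degrees $m+1, \ldots, m^*+1$ is bounded on $\mathcal{B}_{3r}$ by $C r^{m+1}$ with $C$ depending only on $N_{m^*+1}$, while the tail (degrees $> m^*+1$) is controlled through the step-by-step bounds on the homogeneous polynomial $N_k - N_{k-1}$ added at each iteration, which inherit a geometric decay in $r$ from the small-divisor estimates of the iterative construction.

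The main technical work lies in the bookkeeping of domain losses across the $m(r) \sim r^{-a}$ iterations: at each step one sacrifices a small fraction of the angle width and action radius, and one must check that after $m(r)$ steps the accumulated loss still fits inside $\T^d_{\rho/2} \times \mathcal{B}_{3r}$. This is precisely what constrains the choice of the constant $c$ in $m(r) = [c r^{-a}]$ and is the content of the abstract quantitative statement in~\cite{jorba}; the remaining ingredients (bounds on exact-symplectic time-one maps, Cauchy-type estimates for the polynomial $N_r$, and the Stirling optimization) are standard.
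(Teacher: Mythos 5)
Your proposal takes essentially the same route as the paper: both deduce the result from Theorem~3.8 of Jorba--Villanueva via Stirling optimization of the truncation order, and both read off the proximity estimates~\eqref{estdist1} from the fact that $\Psi_r$ is a finite composition of maps $(\theta,I)\mapsto(\theta+\mathcal{O}(|I|),I+\mathcal{O}(|I|^2))$.

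One technical point you gloss over, which the paper spells out: because the Jorba--Villanueva iteration uses Fourier truncation, the integrable part it produces is \emph{not} exactly the formal truncation $N_{m(r)}$ of the BNF --- it agrees with it only up to terms flat of order $|I|^{m(r)+1}$. Before one can set $N_r=N_{m(r)}$ and write $P_r$ for the remainder, one must observe (via Taylor expansion and Cauchy estimates on a slightly larger domain) that this discrepancy is itself bounded by $\exp(-cr^{-a})$ and absorb it into the remainder. For~\eqref{Birk1} your argument (coefficients of degree $\le m^*+1$ fixed by uniqueness of the formal BNF, tail controlled by the coefficient growth built into the small-divisor estimates) differs in presentation from the paper's, which instead re-examines the iteration and allocates the analyticity losses so that the first $m$ Birkhoff steps use widths independent of $r$; both implement the same underlying idea and either would serve.
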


This result follows from the general statement of Theorem 3.8 in \cite{jorba}, up to slight changes in the notation (for instance, we use $r=R^2$ and we can discard elliptic variables) and in the numerical constants involved, and with the following modifications that we now describe, which follow directly from the proof of Theorem 3.8 in \cite{jorba}. 

First, in that reference, they have $N_r \neq N_{m(r)}$ but one can check that
\[ N_r(I)=N_{m(r)}(I)+\mathcal{O}(\abs{I}^{m(r)+1}),\]
that is the difference between $N_r$ and $N_m(r)$ is flat up to order $m(r)$; however, using this, an expansion of $N_r-N_{m(r)}$ into Taylor series (by analyticity) and Cauchy inequalities, one easily finds that $N_r-N_{m(r)}$ satisfy an estimate similar to the one for $P_r$ given in~\eqref{est1} (assuming $N_r$ is defined and bounded on a slightly larger domain, which can be assumed, and restricting $c$ if necessary) and thus upon replacing $P_r$ by $P_r+N_r-N_{m(r)}$, we can indeed assume $N_r = N_{m(r)}$ without affecting~\eqref{est1} and~\eqref{Birk1}. Then the estimates on the distance to the identity stated in~\eqref{estdist1} are slightly better than those stated in Theorem 3.8 (in that reference, $r$ is used to absorb the large positive constant $C$) but they clearly follow from the proof (indeed, $\Psi_r$ is obtained as a finite composition of transformations of the form $(\theta, I) \mapsto (\theta + \mathcal{O}(|I|), I + \mathcal{O}(|I|^2))$). Finally, the estimate~\eqref{Birk1} is stated only for $m=2$ in~\cite{jorba} but holds true for any given ``fixed" integer $m$ such that $1 \leq m \leq m^*+1$ (clearly here $r$ is small enough so that $2 \leq m^*+1 \leq m(r)$); indeed, and this is classical, such an estimate is obtained by applying $m$ steps of Birkhoff normalization with ``large" losses of widths of analyticity (depending on $r_0$ and $m^*$ but independent of $r$) and the remaining $m(r)-m$ steps with uniformly ``small" losses.

\section{Measure on the set of Diophantine points}
\label{sec_arithmetic}

In this section, we consider the integrable Hamiltonian $N_r=N_{m(r)}$, defined and analytic on the real domain $B_{3r}$ (so in particular $N_r$ is smooth on the closed ball $\bar B_{2r}$), which is the truncated BNF given by Theorem \ref{thm_jorba}, and we set $F_r:=\nabla N_r$. We shall restrict ourselves to the case where $2 \leq l \leq d-1$ where $l$ is the dimension of the space $V$; observe that necessarily $m^* \geq 2$ in this case. Using the fact that $F_r$ is analytic (indeed, it is a polynomial map) and $F_r(0)=\omega \in DC(\tau,\gamma)$, we shall prove that the set of points $I \in B_{2r}$, with $r$ small enough, for which $F_r(I)$ is Diophantine has a relatively large Lebesgue measure. Such results are well-known, see for instance \cite{kleinbock}, but we give a proof adapted to our context following~\cite{You}. 
 
\begin{proposition}
\label{arit}
Assume that $2 \leq l \leq d-1$. There exists a constant $C$ such that for $r$ and $\bar{\gamma}$ small enough, if we set $\bar \tau:=(m^*-1) (d+1)+\tau+1$, then we have
 \begin{equation}
 \label{eq_ordre_gamma}
\mathrm{Leb}(\{ I\in B_{2r} \; |\;  F_r(I)\notin DC(\bar \tau, \bar \gamma)\})\leq C \bar \gamma^{1/(m^*-1)}r^{d-1}.
 \end{equation}
\end{proposition}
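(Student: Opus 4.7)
The plan is to decompose the bad set into sublevel sets indexed by integer vectors,
$$\{I \in B_{2r} : F_r(I) \notin DC(\bar\tau, \bar\gamma)\} \subseteq \bigcup_{k \in \Z^d \setminus \{0\}} B_k, \quad B_k := \{I \in B_{2r} : |F_r(I) \cdot k| < \bar\gamma |k|^{-\bar\tau}\},$$
set $f_k(I) := F_r(I) \cdot k$, and bound each $|B_k|$ via a Rüssmann--Pyartli sublevel set estimate following~\cite{You}; summing over $k \neq 0$ will then yield~\eqref{eq_ordre_gamma} provided $\bar\tau$ is chosen large enough.

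The core of the argument is a quantitative non-degeneracy of $F_r$ on the whole ball $B_{2r}$. By the very definition of $m^*$, the vectors $\{\partial^\alpha F_{m^*}(0)\,:\,|\alpha| \leq m^*-1\}$ span $V$, so by equivalence of norms on this finite-dimensional space there exists $c_0 > 0$ with $\max_{|\alpha| \leq m^*-1}|\partial^\alpha F_{m^*}(0) \cdot v| \geq c_0 |v|$ for all $v \in V$. Applying~\eqref{Birk1} with $m = m^*$ and Cauchy estimates between the polydisks of radii $3r$ and $2r$ gives $|\partial^\alpha F_r - \partial^\alpha F_{m^*}|_{B_{2r}} = O(r)$ for every $|\alpha| \leq m^*-1$, together with a uniform $C^{m^*}$ bound on $F_r$ over $B_{2r}$. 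A Taylor expansion in $I$ around the origin then propagates the non-degeneracy from $0$ to every point: for $r$ small enough and every $v \in V$, one can choose $\alpha = \alpha(v)$ with $|\alpha| \leq m^*-1$ such that $|\partial^\alpha F_r(I) \cdot v| \geq (c_0/2)|v|$ for all $I \in B_{2r}$.

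I would then specialize to $v = \pi_V k$, the orthogonal projection of $k \neq 0$ onto $V$. Since $\omega = \nabla N_1(0) \in V_1 \subseteq V$ and $|\omega|=1$, the Diophantine condition gives $|\pi_V k| \geq |\omega \cdot \pi_V k| = |\omega \cdot k| \geq \gamma|k|^{-\tau}$; and each Taylor coefficient $\partial^\alpha F_r(0)/\alpha!$ belonging to $V_{m(r)} = V$ implies $F_r(I) \in V$ and $\partial^\alpha f_k(I) = \partial^\alpha F_r(I) \cdot \pi_V k$. Hence there is $\alpha(k)$ with $p(k) := |\alpha(k)| \leq m^*-1$ and
$$|\partial^{\alpha(k)} f_k(I)| \geq (c_0\gamma/2)|k|^{-\tau} \text{ uniformly in } I \in B_{2r},$$
while $|\partial^\beta f_k|_{B_{2r}} \leq C|k|$ for $|\beta| \leq m^*$ (same closeness estimate). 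The Rüssmann--Pyartli sublevel set lemma in the form developed in~\cite{You} (selection of a unit direction along which the $p(k)$-th directional derivative dominates $\partial^{\alpha(k)} f_k$ via norm equivalence on homogeneous polynomials of degree $p(k)$, 1D Rüssmann inequality on each slice, integration over the transverse $(d-1)$-dimensional coordinate) produces
$$|B_k| \leq C\,r^{d-1}\,\bar\gamma^{1/(m^*-1)}\,|k|^{-(\bar\tau-\tau)/p(k)},$$
using $\bar\gamma^{1/p(k)} \leq \bar\gamma^{1/(m^*-1)}$ for $\bar\gamma<1$ and $p(k) \leq m^*-1$. With the choice $\bar\tau = (m^*-1)(d+1)+\tau+1$, one has $(\bar\tau-\tau)/p(k) \geq d+1+1/(m^*-1) > d$, so $\sum_{k \neq 0}|k|^{-(\bar\tau-\tau)/p(k)} \leq \sum_{k \neq 0}|k|^{-(d+1)} < +\infty$ and the total measure is bounded by $C\,\bar\gamma^{1/(m^*-1)} r^{d-1}$.

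The main obstacle will be the clean multi-dimensional Rüssmann--Pyartli estimate: the direction adapted to $\alpha(k)$ depends on $k$, and the fact that the $C^{m^*}$ upper bounds $|\partial^\beta f_k| \leq C|k|$ scale linearly in $|k|$ means that a unit directional derivative is only uniformly large on a ball of radius comparable to $\gamma/|k|^{\tau+1}$; handling the complement within $B_{2r}$ for large $|k|$ requires the careful covering and summation argument of~\cite{You}, which precisely dictates the shape $(m^*-1)(d+1)+\tau+1$ of the exponent $\bar\tau$ and the power $1/(m^*-1)$ of $\bar\gamma$.
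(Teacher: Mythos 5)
Your proposal is correct in outline but takes a genuinely different route from the paper, and there is one place where the difference actually matters. The paper does not work with $f_k = F_r\cdot k$ directly. Instead it selects a rank-$l$ block of coordinates (so that the $l\times l$ minor $A_{m^*}^l(0)$ of the derivatives $\partial^{\alpha_1}F_{m^*}(0),\dots,\partial^{\alpha_l}F_{m^*}(0)$ is invertible), records that the last $d-l$ components of $F_r$ are \emph{constant} linear combinations of the first $l$ (using that $F_r$ is $V$-valued on $B_{2r}$), and then constructs from $k$ a \emph{unit} vector $\xi_k\in S_l$ and the scalar function $g_{r,k}(I)=\xi_k\cdot F_r^l(I)$. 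The point is the exact identity $|k\cdot F_r(I)| = D_k\,|g_{r,k}(I)|$ with $D_k=\sum_j|\sum_i b_{i,j}k_i+k_j|\geq\gamma|k|^{-\tau}$; the Diophantine factor migrates entirely into the sublevel threshold $\varepsilon_k=\bar\gamma\gamma^{-1}|k|^{\tau-\bar\tau}$, while $g_{r,k}$ itself has $C^{m^*}$ norm and non-degeneracy constant $\beta$ uniform in $k$. Proposition~\ref{prop_russ} then applies with $n=m^*-1$ and $k$-independent constants, and the summation is immediate.

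Your version avoids the coordinate reduction and uses the orthogonal projection $\pi_V k$ together with $\partial^\alpha F_r(I)\in V$ to get $\partial^\alpha f_k(I)=\partial^\alpha F_r(I)\cdot\pi_V k$; this is a clean observation and does give the correct derivative lower bound $\beta_k\sim\gamma|k|^{-\tau}$ on all of $B_{2r}$. But now both the lower bound $\beta_k$ and the $C^{m^*}$ norm of $f_k$ scale with $|k|$, so you cannot quote Proposition~\ref{prop_russ} as stated (where $C$ ``depends on $\beta$'' opaquely): you need a sublevel set estimate with the $\beta$-dependence made explicit, e.g.\ the usual $(\varepsilon/\beta)^{1/n}$ scaling, and you must check that the $|g|_{n+1}$ factor does not spoil the summability. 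With the expected scaling $\mathrm{Leb}(B_k)\lesssim r^{d-1}(\varepsilon_k/\beta_k)^{1/p(k)}$ your arithmetic goes through and yields the same $\bar\tau$, so the approach is salvageable; the paper's normalization simply sidesteps this bookkeeping. One small correction: once the lower bound on $\partial^{\alpha(k)}f_k$ is established uniformly over $B_{2r}$, no additional covering argument over $B_{2r}$ is needed; the real technical point you should be worried about is precisely the $k$-dependence of $\beta$ in Pyartli's inequality, not a covering step. Finally, note that the paper applies the lemma with the fixed order $n=m^*-1$ for every $k$, rather than the $k$-dependent $p(k)$; both choices work, but the paper's is slightly simpler to sum.
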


We first recall that by definition of $V$, there exist $\alpha_1, \dots, \alpha_l \in \N^d$ such that the vectors $\partial_I^{\alpha_1} F_{m^*}(0), \dots, \partial_I^{\alpha_l} F_{m^*}(0)$ are linearly independent, and in view of~\eqref{freq_space}, we necessarily have $|\alpha_i| \leq m^* -1$. For $F_m=(F_{m,1}, \dots, F_{m,d})$, we set $F_m^l=(F_{m,1}, \dots, F_{m,l})$; without loss of generality we may assume that the square matrix of size $l$  
\begin{equation*}
A^{l}_{m^*}(I)= \left(\partial_I^{\alpha_1}F_{m^*}^l(I)^{\top} \; | \ldots | \;  \partial_I^{\alpha_l}F_{m^*}^l(I)^{\top} \right)
\end{equation*}
has a non-zero determinant at $I=0$, hence there exists a constant $\beta>0$ such that, denoting $S_l= \{\xi \in \R^l \; |\; |\xi|=1\}$, we have
\begin{equation}\label{min}
\min_{\xi \in S_l}|\xi \cdot A^{l}_{m^*}(0)| \geq 3\beta. 
\end{equation}
We have the following elementary lemma, where we consider the matrix
\begin{equation*}
A^{l}_{r}(I)= \left(\partial_I^{\alpha_1}F_{r}^l(I)^{\top} \; | \ldots | \;  \partial_I^{\alpha_l}F_{r}^l(I)^{\top} \right).
\end{equation*}

\begin{lemma}
\label{lemma_subespai}
For $r$ small enough, we have
\[ \min_{(I,\xi) \in \bar B_{2r} \times S_l}|\xi \cdot A^{l}_{r}(I)| \geq \beta \]
and as a consequence, for any $I \in \bar B_{2r}$,
\[ \mathrm{Vect}\{\partial_I^{\alpha_1} F^l_{r}(I), \dots, \partial_I^{\alpha_l} F^l_{r}(I)\}=\mathrm{Vect}\{\partial_I^{\alpha} F^l_{r}(I) \; | \; \alpha \in \N^d\}=V. \]
\end{lemma}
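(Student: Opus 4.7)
The plan is to treat $A^l_r(I)$ as a small perturbation of the constant matrix $A^l_{m^*}(0)$ uniformly on $\bar B_{2r}$ and then invoke~\eqref{min} directly. I split the estimate into two pieces: continuity of the fixed polynomial $A^l_{m^*}$ near the origin yields $|A^l_{m^*}(I) - A^l_{m^*}(0)| = \mathcal{O}(r)$ on $\bar B_{2r}$, while the closeness of $N_r$ to $N_{m^*}$ will give $|A^l_r(I) - A^l_{m^*}(I)| = \mathcal{O}(r)$ on the same set.

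For the second piece, I would invoke estimate~\eqref{Birk1} of Theorem~\ref{thm_jorba} with $m = m^*$, giving $|N_r - N_{m^*}|_{3r} \leq C r^{m^*+1}$ on the complex polydisc. The entries of $A^l_r - A^l_{m^*}$ are mixed partial derivatives of $N_r - N_{m^*}$ of total order $|\alpha_i| + 1 \leq m^*$. A standard Cauchy estimate on polydiscs of radius $r$ around real points of $\bar B_{2r}$ (which sit inside $\mathcal{B}_{3r}$) gives, for each such multi-index $\beta$, a bound
\[ |\partial_I^\beta (N_r - N_{m^*})|_{2r} \leq C_\beta\, r^{m^*+1-|\beta|} \leq C_\beta\, r. \]
Since only finitely many multi-indices are involved, this yields the uniform bound $|A^l_r(I) - A^l_{m^*}(I)| \leq C_1 r$ on $\bar B_{2r}$.

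Combining the two pieces gives $|A^l_r(I) - A^l_{m^*}(0)| \leq C_2 r$ on $\bar B_{2r}$, and plugging this into~\eqref{min} produces, for any $\xi \in S_l$ and $I \in \bar B_{2r}$,
\[ |\xi \cdot A^l_r(I)| \geq |\xi \cdot A^l_{m^*}(0)| - C_2 r \geq 3\beta - C_2 r \geq \beta, \]
provided $r < 2\beta/C_2$, which is the first displayed inequality. For the consequence, this lower bound forces $A^l_r(I)$ to be invertible, so its columns $\partial_I^{\alpha_1} F^l_r(I), \dots, \partial_I^{\alpha_l} F^l_r(I)$ are linearly independent in $\R^l$ and thus span $\R^l$; under the WLOG identification of $V$ with $\R^l$ via the projection onto the first $l$ coordinates (a bijection on $V$ by the rank hypothesis on $A^l_{m^*}(0)$), that span coincides with $V$, and any other $\partial_I^\alpha F^l_r(I)$ lies automatically in it.

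The only place that requires genuine care is the Cauchy step: one must verify that $|\beta| \leq m^*$ is exactly what keeps $r^{m^*+1-|\beta|}$ from blowing up, which is the case precisely because $|\alpha_i| \leq m^* - 1$ by definition of $m^*$. Everything else is elementary bookkeeping and a perturbation argument on a finite-dimensional matrix.
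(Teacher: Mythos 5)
Your argument for the quantitative lower bound $\min_{(I,\xi)}|\xi \cdot A^l_r(I)| \geq \beta$ is correct and follows the paper's route: compare $A^l_r(I)$ with $A^l_{m^*}(0)$, control $|A^l_{m^*}(I)-A^l_{m^*}(0)|$ by continuity and $|A^l_r(I)-A^l_{m^*}(I)|$ by~\eqref{Birk1} together with Cauchy estimates, then feed the resulting $\mathcal{O}(r)$ perturbation into~\eqref{min}. Your explicit Cauchy bookkeeping (entries of $A^l_r-A^l_{m^*}$ are partials of $N_r-N_{m^*}$ of order $|\beta|\leq m^*$, so $r^{m^*+1-|\beta|}\leq r$) is exactly what the paper leaves implicit.

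The second part, however, has a genuine gap. You correctly conclude that $\partial_I^{\alpha_1} F^l_r(I), \dots, \partial_I^{\alpha_l} F^l_r(I)$ are linearly independent in $\R^l$. But what the lemma must deliver --- and what Proposition~\ref{arit} actually uses when deriving~\eqref{eq_linear_comb} --- is a statement about the unprojected vectors $\partial_I^{\alpha} F_r(I) \in \R^d$: one needs $\partial_I^{\alpha} F_r(I) \in V$ for every $\alpha$ and every $I \in \bar B_{2r}$, and in particular $F_r(I)\in V$, which is precisely~\eqref{eq_linear_comb}. Linear independence of the projections, together with the fact that $\pi : V \to \R^l$ is a bijection, does not force $\partial_I^{\alpha_j} F_r(I)$ itself to lie in $V$: the span of these $d$-dimensional vectors is some $l$-plane that projects bijectively onto $\R^l$, but a priori that plane could be any graph over $\R^l$, not necessarily $V$. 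Your phrase ``any other $\partial_I^{\alpha} F^l_r(I)$ lies automatically in it'' is either a tautology about $\R^l$ (and then contentless) or an unproved claim about $V$. The paper closes this gap with a separate observation: $F_r = \nabla N_{m(r)}$ is a polynomial, so $\partial_I^{\alpha} F_r(I)$ is a finite Taylor combination $\sum_\beta \frac{I^\beta}{\beta!}\,\partial_I^{\alpha+\beta} F_r(0)$, and each $\partial_I^{\alpha+\beta} F_r(0)$ lies in $V_{m(r)} = V$ (the equality holding because $m(r)\geq m^*$ once $r$ is small). This gives the inclusion $\mathrm{Vect}\{\partial_I^{\alpha} F_r(I)\;|\;\alpha\in\N^d\} \subseteq V$, and combined with the linear independence you established, all inclusions collapse to equalities. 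Adding this Taylor-at-the-origin step would complete your proof.
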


\begin{proof}
First, in view of~\eqref{min}, for $r$ small enough
\[ \min_{(I,\xi) \in \bar B_{2r} \times S_l}|\xi \cdot A_{m^*}^l(I)| \geq 2\beta. \]
Then observe that~\eqref{Birk1} with $m=m^*$ implies, for all $I \in \bar B_{2r}$ and $\alpha \in \N^d$ with $|\alpha| \leq m^*-1$, that
\[ |\partial_I^\alpha F_r(I)-\partial_I^\alpha F_{m^*}(I)| \leq Cr  \]
for some positive constant $C$ and consequently 
\[ \min_{(I,\xi) \in \bar B_{2r} \times S_l}|\xi \cdot A_{r}^l(I)| \geq \beta \]
for $r$ small enough. This proves the first part of the statement. For the second part of the statement, we have the inclusions:
\[ \mathrm{Vect}\{\partial_I^{\alpha_1} F_{r}(I), \dots, \partial_I^{\alpha_l} F_{r}(I)\} \subseteq \mathrm{Vect}\{\partial_I^{\alpha} F_{r}(I) \; | \; \alpha \in \N^d\} \subseteq V.\]
Indeed, the first one is obvious, while the second one follows from the fact that $F_r$ is analytic at $I=0$ together with the fact $V_{m(r)}=V$ since we are assuming $r$ small enough. We have just shown that the vectors $\partial_I^{\alpha_1} F_{r}(I), \dots, \partial_I^{\alpha_l} F_{r}(I)$ are linearly independent for $I \in \bar B_{2r}$ and hence they generate $V$ and the subspaces above are all equal.  
\end{proof}

Proposition~\ref{arit} will follow from the above lemma together with  the following Pyartli type inequality (the precise proposition below follows from Theorem 17.1 in \cite{russ}).

\begin{proposition}
\label{prop_russ}
Let $g : \bar B_{2r} \rightarrow \R$ be a function of class $C^{n+1}$ satisfying
\begin{equation*}
\min_{I\in \bar{B}_{2r}} \max_{0\leq j \leq n} \abs{D^{j}g(I)}\geq \beta 
\end{equation*}
 for some integer $n \geq 1$ and $\beta>0$. Then there exists $r'>0$ which depends only on $d$ such that for any $0<r<r'$ and any $0< \varepsilon \leq  \beta(2n+2)^{-1}$, we have
 \begin{equation}
  \label{eq_est_measure}
\mathrm{Leb}(\{I\in  B_{2r}\; | \; \abs{g(I)} \leq \varepsilon\}) \leq
    C |g|_{n+1}\varepsilon^{1/n}r^{d-1}\
 \end{equation}
where $|g|_{n+1}$ is the $C^{n+1}$ norm of $g$ and $C>0$ is a constant depending only on $d$, $n$ and $\beta$.
\end{proposition}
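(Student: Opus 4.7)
The plan is to reduce this multidimensional Pyartli-type estimate to the classical one-dimensional Pyartli lemma via a slicing argument in a carefully chosen direction. Recall the one-dimensional statement: if $h\in C^{n+1}([\alpha,\beta])$ and for each $t$ some derivative $|h^{(j)}(t)|$ with $j\leq n$ is bounded below by $b$, then $\mathrm{Leb}\{|h|\leq\varepsilon\}\leq C_n\,\varepsilon^{1/n}$, with $C_n$ depending on $b$ and on $\|h\|_{C^{n+1}}$; this is proved by induction on $n$, the step splitting the interval according to whether $|h^{(n-1)}|$ is below or above an auxiliary threshold that is then optimized.

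For each $I\in\bar B_{2r}$, the hypothesis provides some $j(I)\leq n$ with $|D^{j(I)}g(I)|\geq\beta$. Since $D^j g(I)$ is a symmetric $j$-linear form on $\R^d$, comparing its norm with its evaluation on unit vectors yields a direction $\xi(I)\in S^{d-1}$ and a dimensional constant $c_0=c_0(d,n)$ such that $|\partial_{\xi(I)}^{j(I)}g(I)|\geq \beta/c_0$. To make the direction globally constant on pieces of a partition, I choose a finite $\eta$-net $\{\xi_1,\dots,\xi_K\}$ of $S^{d-1}$ with $\eta$ small in terms of $\beta$ and $|g|_{n+1}$ and, using $|\partial_\xi^j g - \partial_{\xi'}^j g|\leq j\,|g|_j\,|\xi-\xi'|$, obtain the finite covering
\[
\bar B_{2r}=\bigcup_{k,j}A_{k,j},\qquad A_{k,j}:=\{I\in\bar B_{2r}\,:\,|\partial_{\xi_k}^j g(I)|\geq \beta/(2c_0)\},
\]
with $1\leq k\leq K$ and $0\leq j\leq n$. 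On each $A_{k,j}$ I rotate coordinates so that $\xi_k=e_1$ and foliate $\bar B_{2r}$ by lines in the $e_1$ direction; for each transverse $y$ in the $(d-1)$-ball of radius $2r$, applying the one-dimensional Pyartli lemma on each connected component of the slice $L_y\cap A_{k,j}$ gives $\mathrm{Leb}\{t\,:\,|g(t,y)|\leq\varepsilon\}\leq C'\,\varepsilon^{1/n}\beta^{-1/n}$, where I use that $\varepsilon\leq\beta/(2n+2)$ forces $\varepsilon^{1/j}\leq\varepsilon^{1/n}$ for $j\geq 1$ and that the sublevel set is empty for $j=0$. Integration over the transverse $(d-1)$-ball produces the factor $r^{d-1}$, and summing over $(k,j)$ yields the claimed bound.

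The main obstacle is the accounting of constants: I must ensure that the number $K$ of caps in the $\eta$-net and the number of connected components of each 1D slice $L_y\cap A_{k,j}$ combine, together with the constants from the 1D Pyartli lemma, into the stated multiplicative structure $C(d,n,\beta)\,|g|_{n+1}\,\varepsilon^{1/n}\,r^{d-1}$. The number of sign changes of $\partial_{\xi_k}^j g - \beta/(2c_0)$ along each line is controlled by $|g|_{n+1}$ via a Rolle-type argument, and the smallness condition $r<r'(d)$ ensures that the transverse integration and the geometric estimates on $B_{2r}$ are standard.
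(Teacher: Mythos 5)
The paper does not prove Proposition~\ref{prop_russ}: it is derived directly from Theorem~17.1 of R\"ussmann~\cite{russ}. Your slicing strategy is in the same spirit as standard proofs of such estimates, but as written it has a genuine gap that you yourself flag without resolving, namely the accounting of constants.

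The problem is quantitative. Your covering argument requires a finite $\eta$-net $\{\xi_1,\dots,\xi_K\}$ of $S^{d-1}$ with $\eta$ small enough that the sets $A_{k,j}$ cover $\bar B_{2r}$; since the passage from $|\partial_{\xi(I)}^{j}g(I)|\geq\beta/c_0$ to $|\partial_{\xi_k}^{j}g(I)|\geq\beta/(2c_0)$ uses $|\partial_\xi^j g-\partial_{\xi'}^j g|\lesssim|g|_{n+1}|\xi-\xi'|$, you are forced to take $\eta\lesssim\beta/|g|_{n+1}$, so $K\sim(|g|_{n+1}/\beta)^{d-1}$. Summing the sliced estimate over the $K(n+1)$ pieces of the covering then produces a bound of the form $C(d,n)\beta^{-?}|g|_{n+1}^{\,d-1}\cdot(\text{components per slice})\cdot\varepsilon^{1/n}r^{d-1}$, i.e.\ at least a $(d-1)$-th power of $|g|_{n+1}$, not the linear dependence asserted in~\eqref{eq_est_measure}. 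There is no visible mechanism in your argument by which the contributions from different caps fail to add. On top of this, the one-dimensional ``pointwise max of derivatives'' Pyartli lemma you invoke is not the standard one (which assumes $|h^{(n)}|\geq b$ globally, whence the $n$-th derivative is one-signed and the component count is automatic); with only pointwise control the number of components of a slice needs a separate Rolle-type argument, and the dependence of the constant $C_n$ on $\|h\|_{C^{n+1}}$ that you assert has to be established, not assumed. You explicitly describe this bookkeeping as ``the main obstacle'' and leave it open, which is exactly where the proof breaks.

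Two mitigating remarks. First, for the use made of this proposition in the proof of Proposition~\ref{arit}, the $C^{m^*}$-norm of $g_{r,k}$ is uniformly bounded by a constant, so a weaker estimate with $|g|_{n+1}^{d-1}$ in place of $|g|_{n+1}$ would serve the paper's purposes equally well. Second, the overall slicing idea is sound; what is missing is a way to avoid the net over $S^{d-1}$ (for instance, R\"ussmann's Theorem~17.1 argues differently and does not incur the $(d-1)$-power), or a demonstration that the cap contributions do not accumulate. As it stands, the proposal does not prove the proposition as stated.
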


\begin{proofof}{\textbf{Proposition} \ref{arit}}
Recall that we are assuming $2 \leq l \leq d-1$. For $r$ small enough, Lemma~\ref{lemma_subespai} applies and since $\{\partial_I^{\alpha_1} F^l_{r}(I), \dots, \partial_I^{\alpha_l} F^l_{r}(I)\}$ are linearly independent and generate $V$ for all $I \in \bar B_{2r}$, there exist $b_{i,j}$ for $l+1 \leq i \leq d$ and $1 \leq j \leq l$ such that for all $I \in \bar B_{2r}$,
 \begin{equation}
  \label{eq_linear_comb}
  F_{r,l+1}(I)= \sum_{j=1}^{l}{b_{l+1, j} F_{r,j}(I)},\; \ldots \;, F_{r,d}(I)= \sum_{j=1}^{l}{b_{d,j} F_{r,j}(I)}. 
 \end{equation}
 We can also write $\omega=(\omega_1, \ldots, \omega_l, \sum_{j=1}^{l}{b_{l+1, j}\omega_j}, \ldots, \sum_{j=1}^{l}{b_{d,j}\omega_j})$, since $\omega \in V$, and using the fact that $\omega \in DC(\tau, \gamma)$ with $|\omega|=1$, we obtain, for all $k \in \Z^d \setminus\{0\}$,
 \begin{equation}
  \label{bound_omega}
   \sum_{j=1}^{l}\bigg{|}\sum_{i=l+1}^{d}{(b_{i,j}k_i+k_j)} \bigg{|} \geq \gamma |k|^{-\tau}.
 \end{equation}
Now for any $k\in \Z^d \setminus \{0\}$, consider the vector $\xi_k:=(\xi_{k,1}, \ldots, \xi_{k,l}) \in S_l$  defined by 
 \begin{equation*}
  \xi_{k,j}:= \frac{\sum_{i=l+1}^{d}{b_{i,j}k_i+k_j}}{\left(\sum_{j=1}^{l}{\abs{\sum_{i=l+1}^{d}{(b_{i,j}k_i+k_j)}}} \right)}, \quad 1 \leq j \leq l,
 \end{equation*}
and consider also the map $g_{r,k}: \bar B_{2r} \to \R$ defined by
 \begin{equation*}
 g_{r,k}(I)=\xi_k \cdot F_r^l(I)= \sum_{j=1}^l {\xi_{k,j} F_{r,j} (I)}.
 \end{equation*} 
It follows from~\eqref{eq_linear_comb} and~\eqref{bound_omega} that for all $I \in \bar B_{2r}$
 \begin{equation*}
|k \cdot F_r(I)|  \geq \gamma |k|^{-\tau} |g_{r,k}(I)|
\end{equation*}
 and thus, if we define, for all $k \in \Z^{d} \setminus \{0\}$ and some $\bar{\gamma}>0$ and $\bar \tau >0$,
\[ U_k:=\left\{I\in B_{2r} \; | \; |k \cdot F_r(I)| < \bar \gamma |k|^{-\bar \tau} \right\}\] 
and
\[V_k:=\left\{I\in B_{2r} \; |\; |g_{r,k}(I)|<\varepsilon_k:=\bar{\gamma}\gamma^{-1} |k|^{-\bar \tau+\tau}\right\} \]
we have the inclusion $U_k \subseteq V_k$ and consequently
\begin{align}
&\mathrm{Leb}(\{ I\in B_{2r} \; |\;  F_r(I)\notin DC(\overline{\tau}, \overline{\gamma})\})\leq \sum_{k \in \Z^{d} \setminus \{0\}} \mathrm{Leb}(U_k) \nonumber \\
& \leq \sum_{k \in \Z^{d} \setminus \{0\}} \mathrm{Leb}(V_k). \label{mes}  
\end{align}
It remains to estimate the Lebesgue measure of $V_k$. Clearly, the function $g_{r,k}$ is of class $C^{m^*}$ with a $C^{m^*}$-norm bounded due to the estimate in \eqref{Birk1} with $m=m^*$, and from Lemma~\ref{lemma_subespai} we have
\begin{align*}
&\min_{I\in \bar{B}_{2r}} \max_{0\leq j \leq m^*-1} \abs{D^{j}g_{r,k}(I)}\geq \min_{I \in \bar{B}_{2r}}\max_{j=1, \ldots, l}\abs{\partial_I^{\alpha_j}g_{r,k}(I)} \\
&\geq  \min_{(I,\xi) \in \bar B_{2r} \times S_l}|\xi \cdot A^{l}_{r}(I)| \geq \beta.
\end{align*}
For $r$ small enough, and choosing $\bar \gamma$ small enough so that $\varepsilon_k=\bar{\gamma}\gamma^{-1} |k|^{-\bar \tau+\tau} \leq \beta(2m^*)^{-1}$ for all $k \in \Z^d \setminus \{0\}$, Proposition \ref{prop_russ} applies to $g=g_{r,k}$ (and $n=m^*-1 \geq 1$) and yields the estimate   
\[ \mathrm{Leb}(V_k) \leq
    C\varepsilon_k^{1/(m^*-1)}r^{d-1}\leq C\bar{\gamma}^{1/(m^*-1)} |k|^{(-\bar \tau+\tau)/(m^*-1)}r^{d-1} \]
which, together with~\eqref{mes}, yields the wanted estimate because of our choice of $\bar \tau=(m^*-1) (d+1)+\tau+1$.
\end{proofof}

\section{Integrable normal forms near a set of Diophantine points}
\label{sec_exp}

In this section, we shall state the fact that a general $\mu$-perturbation of some integrable analytic Hamiltonian is, restricted to some neighborhood of Diophantine points, conjugated (symplectically and analytically) to some integrable normal form up to a remainder which is exponentially small with respect to $1/\mu$. Later on this will be applied to the Hamiltonian $H \circ \Psi_r$ given by Theorem~\ref{thm_jorba}, thus with $\mu$ exponentially small with respect to $1/r$, on the set of Diophantine points given by Proposition~\ref{arit} for some proper choice of $\bar \gamma$, eventually leading to a stability result which is doubly exponentially small with respect to $1/r$. 

To state this result precisely, we consider, for some positive parameters $\bar r$, $\bar \rho$, $M$ and $\mu$:
\begin{equation}\label{Ham2}
\begin{cases}
\overline{H}(\theta, I)=N(I) + P(\theta, I) \in \mathcal{A}_{\bar \rho,\bar r}, \\
|\nabla^2 N|_{\bar r} \leq M, \quad |P|_{\bar \rho,\bar r} \leq \mu
\end{cases}
\end{equation}
and given $0< \bar \gamma< \bar r$, $\bar \tau \geq d-1$, we consider the set
\begin{equation}\label{dioset}
 S=\{ I\in B_{\bar r-\bar \gamma} \; |\;  \nabla N(I)\in DC(\bar \tau, \bar \gamma)\}
\end{equation}
which is assumed to be non-empty: this is the set of $(\bar \tau,\bar \gamma)$-Diophantine points in $B_{\bar r}$ which are at a distance $\bar \gamma$ from the boundary of $B_{\bar r}$. In the terminology of~\cite{Pos93}, the set $S$ is completely $\alpha$, $K$-non resonant, for any $K \geq 1$ and $\alpha=\bar \gamma K^{-\bar \tau}$. Given $\delta>0$ we let
\[ V_\delta S:=\{I \in \C^d \; | \; d(I,S) < \delta\} \]
be the complex $\delta$-neighborhood of $S$: we shall have $\delta<\bar \gamma$ below so that $V_\delta S \cap \R^d$ will be indeed included in $B_{\bar r}$. By a slight abuse of notation we simply denote by $\norm{\cdot}_{\rho, \delta}$ the uniform norm for functions analytic and bounded on $\T^d_{\rho}\times V_\delta S$. The following statement follows from the normal form lemma of~\cite{Pos93}.

\begin{theorem}\label{pos}
Assume $\overline{H}$ is as in \eqref{Ham2} with $S\neq \emptyset$ as in~\eqref{dioset}. Then there exist positive constants $\bar c$ and $\bar C$ which depend only on $d$, $\overline{\tau}$, $\overline{\rho}$ and $M$ such that setting 
\[ \delta:=\mu^{1/2}, \quad \nu:=\left(\bar{c}^{-1} \bar{\gamma}^{-1}  \delta\right)^{\bar a}, \quad \bar a:=1/(\bar \tau+1) \]
and assuming that
\begin{equation}\label{seuil}
\nu<1
\end{equation}
there exists an exact-symplectic embedding 
\[ \Phi :  \T^d_{\bar \rho/6} \times V_{\delta/2} S \longrightarrow \T^d_{\bar \rho} \times V_{\delta}S\] 
with the estimates
\begin{equation}\label{estdist2}
\norm{\Pi_\theta\Phi-\mathrm{Id}}_{\bar \rho/6,\delta/2} \leq \nu, \quad \norm{\Pi_I\Phi-\mathrm{Id}}_{\bar \rho/6,\delta/2} \leq \nu \delta  
\end{equation}
such that
\[  \overline{H} \circ \Phi=N(I) +G(I)+ R(\theta, I)  \]
with the estimates
\begin{equation}\label{est2}
\norm{G}_{\delta/2} \leq \overline{C}\mu, \quad \norm{R}_{\bar \rho/6,\delta/2} \leq \overline{C}\mu\exp(-1/\nu).
\end{equation}
\end{theorem}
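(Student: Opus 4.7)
The plan is to derive Theorem~\ref{pos} from the normal form lemma of~\cite{Pos93} by identifying our parameters $(\bar\tau,\bar\gamma,\delta,\nu)$ with the non-resonance parameters $(\alpha,K)$ used there. The first step is to upgrade the Diophantine condition on $S$ to a uniform small-divisor bound on the complex neighborhood $V_\delta S$. Since $\nabla N(I_0)\in DC(\bar\tau,\bar\gamma)$ for every $I_0\in S$ and $|\nabla^2 N|_{\bar r}\leq M$, for any $I\in V_\delta S$ and any $k\in\Z^d\setminus\{0\}$ with $|k|\leq K$ one has
\[ |k\cdot\nabla N(I)|\geq \bar\gamma |k|^{-\bar\tau}-MK\delta\geq \tfrac{1}{2}\bar\gamma K^{-\bar\tau}, \]
as soon as $\delta \leq \bar\gamma/(2MK^{\bar\tau+1})$. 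In Pöschel's terminology, $S$ is then completely $(\alpha,K)$-non-resonant on $V_\delta S$ with $\alpha=\tfrac12\bar\gamma K^{-\bar\tau}$.

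Next, I would run a classical iterative averaging scheme of Nekhoroshev/Pöschel type on $\T^d_{\bar\rho}\times V_\delta S$. At each step, split the perturbation $P=P^{\leq K}+P^{>K}$ along the Fourier cutoff $|k|\leq K$; the high-frequency tail satisfies $|P^{>K}|\lesssim |P|\exp(-K\bar\rho/6)$ by analyticity. Solve the homological equation $\{N,\chi\}+P^{\leq K}=\langle P^{\leq K}\rangle_\theta$ on $\T^d_{\bar\rho/2}\times V_\delta S$, using the lower bound above together with Cauchy estimates to bound $\chi$ and its derivatives by $\alpha^{-1}$ times the norm of $P$, with a small loss of analyticity width. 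The time-one flow $\Phi_\chi$ then yields
\[ \overline H\circ\Phi_\chi = N+\langle P^{\leq K}\rangle_\theta + P_+, \]
where $P_+$ contains the high-frequency tail $P^{>K}$ together with the quadratic-in-$\chi$ remainders from Taylor expansion; a standard estimate gives $|P_+|\lesssim \mu\exp(-K\bar\rho/6)+\mu^2/\alpha$ after paying the appropriate loss of widths.

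The third step is to iterate this with a geometric schedule of shrinking widths $\sigma_j$ summing to $\bar\rho/6$ and $\eta_j$ summing to $\delta/2$, and increasing cutoffs $K_j$. The parameters in the statement correspond to the optimal balance: with $\delta=\mu^{1/2}$, choosing $K$ of order $1/(\bar\rho\nu)$ equates the two terms in $P_+$ above and propagates a geometric decay $|P_+|\leq\tfrac12|P|$ at each step, so that after infinitely many steps the remainder is of order $\mu\exp(-1/\nu)$, which is the bound on $R$ in~\eqref{est2}. The $G(I)$ term is the accumulated average $\sum_j\langle\cdot\rangle_\theta$, of size $\mathcal{O}(\mu)$. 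The smallness threshold $\nu<1$ in~\eqref{seuil} is exactly what is needed to guarantee convergence of this telescoping scheme and the preservation of $\delta\leq\bar\gamma/(2MK^{\bar\tau+1})$ throughout; the total symplectic transformation $\Phi=\Phi_{\chi_1}\circ\Phi_{\chi_2}\circ\cdots$ then satisfies \eqref{estdist2} by summing the per-step bounds $|\chi_j|/\alpha_j$ on the flows.

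The main obstacle is the careful bookkeeping: one must track at each step the triple (width of analyticity, neighborhood radius, cutoff) so that the small-divisor estimate of the first step remains valid, the total loss of domain does not exceed the prescribed $(\bar\rho/6,\delta/2)$, and the iteration telescopes geometrically. All of this is precisely what the normal form lemma of~\cite{Pos93} does in full generality, and once our set $S$ and parameters are recast in terms of Pöschel's $\alpha,K$ framework as above, Theorem~\ref{pos} follows directly by choosing the constants $\bar c,\bar C$ to absorb the dimensional and geometric factors depending only on $d$, $\bar\tau$, $\bar\rho$ and $M$.
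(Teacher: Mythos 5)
Your proposal is correct and takes essentially the same approach as the paper: the paper simply observes that $S$ is completely $\alpha,K$-nonresonant for $\alpha=\bar\gamma K^{-\bar\tau}$ and invokes Pöschel's normal form lemma, which is exactly the reduction you carry out (your additional sketch of the iteration behind Pöschel's lemma, and the verification that the small-divisor bound persists on the $\delta$-neighborhood under the constraint $\delta\lesssim\bar\gamma/(MK^{\bar\tau+1})$, are details the paper leaves implicit in the citation).
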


\section{Proofs of the main results}
\label{sec_conclusion}

We are now ready to conclude the proof of our main Theorem~\ref{thm_main}, as a consequence of Theorem~\ref{thm_jorba}, Proposition~\ref{arit} and Theorem~\ref{pos}.
\begin{proofof}{\textbf{Theorem} \ref{thm_main}}
It is sufficient to prove the statement when the dimension $l$ of the space $V$ satisfies $2 \leq l \leq d-1$; the cases $l=1$ and $l=d$ will be consequences of respectively Theorem~\ref{cor_dim1} and Theorem~\ref{cor_dimd} below. We shall assume $r$ small enough finitely many times in the sequel, without explicitly mentioning it, and we shall denote by $c$ and $C$ positive constants which may vary from line to line. First, Theorem~\ref{thm_jorba} applies and yields an exact-symplectic embedding  $\Psi_r : \T^{d}_{\rho/2} \times \mathcal{B}_{3r}  \longrightarrow \T^{d}_{\rho} \times \mathcal{B}_{4r}$ with the estimates~\eqref{estdist1} such that
\[ H \circ \Psi_{r}(\theta, I)= N_r(I)+ P_{r}(\theta, I)\] 
with the estimates~\eqref{est1} on $P_r$ and~\eqref{Birk1} on $N_r$. We set
\begin{align*}
 &\bar r:=2r, \quad \bar \rho:=\rho/2, \quad \mu:=\exp(-cr^{-a}), \\
 &\bar \gamma:=\mu^{1/4}, \quad \bar \tau:=(m^*-1) (d+1)+\tau+1. 
 \end{align*}
Observe that any fixed positive power of $\mu$, even multiplied by a large positive constant or a fixed negative power of $r$, is bounded by $\exp(-cr^{-a})$ for some appropriate constant $c$, provided $r$ is small enough. Then Proposition~\ref{arit} applies and gives the estimate
\[ \mathrm{Leb}(\{ I\in B_{\bar r}=B_{2r} \; |\;  F_r(I)\notin DC(\bar \tau, \bar \gamma)\})\leq C \bar \gamma^{1/(m^*-1)}r^{d-1} \]
and consequently, the set
\[ S_r=\{ I\in B_{\bar r- \bar \gamma} \; |\;  \nabla N_r(I)=F_r(I)\in DC(\bar \tau, \bar \gamma)\} \]
is non-empty and we have the measure estimate
\begin{equation}\label{measure}
\mathrm{Leb} (B_{2r} \setminus S_r) \leq  C \bar \gamma^{1/(m^*-1)}r^{d-1}+C\bar \gamma r^{d-1} \leq C\exp(-cr^{-a})r^{d-1}
\end{equation}
Next we want to apply Theorem~\ref{pos} to
\[ \bar{H}:=H \circ \Psi_r, \quad N:=N_r, \quad P:=P_r \]
and it follows from~\eqref{Birk1} with $m=2$ that $|\nabla^2N_r|_{\bar r}$ is indeed bounded by a constant, and so are $\bar c$ and $\bar C$, therefore recalling that
\[ \delta=\mu^{1/2}, \quad \nu=\left(\bar{c}^{-1} \bar{\gamma}^{-1}  \delta\right)^{\bar a}, \quad \bar a=1/(\bar \tau+1) \]
we do have
\[ \nu=\left(\bar{c}^{-1} \mu^{1/4}\right)^{\bar a} \leq \exp(-cr^{-a}). \]
We have the inclusion $\T^{d}_{\bar \rho} \times V_\delta S_r \subseteq \T^{d}_{\rho/2} \times \mathcal{B}_{3r} $ and the condition~\eqref{seuil} holds true hence Theorem~\ref{pos} applies and yields an exact-symplectic embedding
\[ \Phi=\Phi_r :  \T^d_{\bar \rho/6} \times V_{\delta/2} S_r \longrightarrow \T^d_{\bar \rho} \times V_{\delta}S_r\] 
with the estimates~\eqref{estdist2} such that 
\[  \tilde{H}_r:=\overline{H} \circ \Phi_r=H \circ \Psi_r \circ \Phi_r=N_r(I) +G_r(I)+ R_r(\theta, I)  \]
with the estimates
\begin{equation}\label{doubleexp}
\norm{G_r}_{\delta/2} \leq \overline{C}\mu, \quad \norm{R_r}_{\bar \rho/6,\delta/2} \leq \overline{C}\mu\exp(-1/\nu).
\end{equation}
From now on, we shall only consider real domains and the transformations restricted to these domains. We define
\[ \tilde{\mathcal{V}}_r:=\T^d \times (V_{\delta/4}S_r \cap \R^d), \quad \mathcal{V}_r:=\Psi_r(\Phi_r(\tilde{\mathcal{V}}_r))  \]
and we shall prove first that the stability estimates hold true for any solution $(\theta(t),I(t))$ of $H$ with initial condition $(\theta_0,I_0) \in \mathcal{V}_r$. To do so, we consider the corresponding solution $(\tilde{\theta}(t),\tilde{I}(t))$ of $\tilde{H}_r=H \circ \Psi_r \circ \Phi_r$ with initial condition $(\tilde{\theta}_0,\tilde{I}_0) \in \tilde{\mathcal{V}}_r$ and we define
\begin{equation*}
\omega(I_0):= \nabla N_r(\tilde{I}_0)+\nabla G_r(\tilde{I}_0) \in \R^d. 
\end{equation*} 
Since the transformations are symplectic, $\Psi_r(\Phi_r(\tilde{\theta}(t),\tilde{I}(t)))=(\theta(t),I(t))$ as long as the solutions are defined, and from~\eqref{estdist1} and~\eqref{estdist2} we have
\[ |\tilde{I}(t)-I(t)| \leq Cr^2+\delta\nu \leq Cr^2, \quad |\tilde{\theta}(t)-\theta(t)| \leq Cr+\nu \leq Cr.  \]
From the estimates~\eqref{Birk1} (again with $m=2$) and~\eqref{doubleexp}, together with Cauchy inequalities, we have
\begin{equation}\label{bout1}
|\nabla^2 N_r|_{2r} \leq C, \quad \norm{\nabla^2 G_r}_{\delta/3} \leq C
\end{equation}
and if we set
\[ T:=\exp(1/\nu) \geq \exp\left(\exp(cr^{-a})\right) \]
then
\begin{equation}\label{bout2}
\norm{\partial_IR_r}_{\bar \rho/6,\delta/3} \leq T^{-1}, \quad \norm{\partial_\theta R_r}_{\bar \rho/7,\delta/2} \leq T^{-1}.
\end{equation}
From the Hamiltonian equations associated to $\tilde{H}$ and the second inequality of~\eqref{bout2}, we have
\begin{equation}\label{bout3}
 |\tilde{I}(t)-\tilde{I}_0| \leq T^{-2/3} \quad \text{for} \quad |t| \leq T^{1/3}
\end{equation}
and therefore
\[ |I(t)-I_0| \leq T^{-2/3} +2Cr^2 \leq C r^2 \quad \text{for} \quad |t| \leq T^{1/3}.  \]
Then using~\eqref{bout1},~\eqref{bout3} and the definition $\omega(I_0)$, we have
\[ |\partial_IN_r(\tilde{I}(t))+\partial_IG_r(\tilde{I}(t))-\omega(I_0)| \leq C T^{-2/3} \quad \text{for} \quad |t| \leq T^{1/3}, \] 
and from the Hamiltonian equations and the first inequality of~\eqref{bout2}, we also have
\[ |\dot{\tilde{\theta}}(t)-\partial_IN_r(\tilde{I}(t))-\partial_IG_r(\tilde{I}(t))| \leq T^{-2/3} \quad \text{for} \quad |t| \leq T^{1/3}. \]
Thus
\[ |\dot{\tilde{\theta}}(t)-\omega(I_0)| \leq CT^{-2/3} \quad \text{for} \quad |t| \leq T^{1/3}. \]
This last inequality implies
\[ |\tilde{\theta}(t)-\tilde{\theta}_0-t\omega(I_0)| \leq CT^{-1/3} \quad \text{for} \quad |t| \leq T^{1/3}, \]
which gives
\[ |\theta(t)-\theta_0-t\omega(I_0)| \leq CT^{-1/3}+2Cr \leq C r \quad \text{for} \quad |t| \leq T^{1/3}. \]
Since $T^{1/3} \geq \exp\left(\exp(c r^{-a})\right)$, this concludes the proof of the stability estimates, and now it remains to prove that $\mathcal{V}_r$ contains an open set $\mathcal{U}_r$ with the wanted relative measure estimate. To do so, we first observe that $\Phi_r(\tilde{\mathcal{V}}_r)$ is contained in $\T^d \times (V_{\delta}S_r \cap \R^d)$, hence it is contained in $\T^d \times B_{2r}$ but also, in view of~\eqref{estdist2}, $\Phi_r(\tilde{\mathcal{V}}_r)$ contains (for instance) $\T^d \times (V_{\delta/5}S_r \cap \R^d)$ hence it contains $\T^d \times S_r$ and thus from~\eqref{measure} we get
\begin{equation*}
\mathrm{Leb} (\T^d \times B_{2r} \setminus \Phi_r(\tilde{\mathcal{V}}_r))) \leq\mathrm{Leb} (\T^d \times B_{2r} \setminus \T^d \times S_r) \leq C\exp(-cr^{-a})r^{d-1}.
\end{equation*}
Then, from~\eqref{estdist1}, $\Psi_r$ is a Lipeomorphism of $\T^d \times B_{2r}$ onto its image, which contains $\T^d \times B_{r}$, so that if we define
\[ \mathcal{U}_r:=\mathcal{V}_r \cap \left(\T^d \times B_{r}\right)= \Psi_r(\Phi_r(\tilde{\mathcal{V}}_r)) \cap \left(\T^d \times B_{r}\right), \]
we finally get
\[ \mathrm{Leb} (\T^d \times B_{r} \setminus \mathcal{U}_r)  \leq C\exp(-cr^{-a})r^{d-1} \leq \exp(-cr^{-a}) \mathrm{Leb} (\T^d \times B_{r})   \]
and this concludes the proof.
\end{proofof}

\begin{proofof}{\textbf{Theorem} \ref{cor_dim1}}
The proof follows the same lines as in the proof of Theorem~\ref{thm_main}, the only difference being that since we are assuming $V=\R\omega$, then $F_r(I)=\nabla N_r(I)=\lambda_r(I)\omega$ with $\lambda_r(0)=1$, and from~\eqref{Birk1} with $m=2$, if follows that the real-valued function $\lambda_r$ is close to $1$ for $r$ small enough. We can therefore choose $\bar{\tau}=\tau$ and for instance $\bar{\gamma}=\gamma/2$ so that Proposition~\ref{arit} becomes useless since in this case, one can choose $S_r=B_{2r}$ and consequently $\mathcal{U}_r=\T^d \times B_{r}$.
\end{proofof}

\begin{proofof}{\textbf{Theorem} \ref{cor_dimd}}
The proof follows directly from Theorem~\ref{thm_jorba} and a general KAM theorem proved by R{\"u}ssmann. Indeed, as we already pointed out, the assumption that $V=\R^d$ means that $N_{m^*}$, and consequently $N_{r}$ for $r$ small enough, is R{\"u}ssmann non-degenerate; the main result of~\cite{russ} applies to a perturbation of size $\mu=\exp(-cr^{-a})$, and the set not covered by DQP tori is estimated by a constant times $\mu^{1/(2(m^*-1))}$ which is still bounded by a quantity of the form $\exp(-cr^{-a})$.    
\end{proofof}

\noindent \textbf{Acknowledgements.} The second author would like to thank Bassam Fayad and Maria Saprykina for useful discussions along this work, and has been supported by the Swedish Research Council (VR 2015-04012).


\end{document}